\documentclass[a4paper, 12pt]{amsart}
\usepackage[utf8]{inputenc}
\usepackage[english]{babel}
\usepackage[T1]{fontenc}
\usepackage{amsmath, amssymb, amsfonts}
\usepackage[all]{xy}
\usepackage{enumerate}
\usepackage{graphicx}
\usepackage[hmargin=2.5cm, vmargin=2.5cm]{geometry}
\usepackage{rotate}
\usepackage{MnSymbol}
\usepackage{tikz}
\usepackage{comment}
\usepackage{hyperref}

\title[Representations of noncrossing quantum groups]{On the representation theory of some noncrossing partition quantum groups}
\author{Amaury Freslon}
\keywords{Compact quantum groups, representation theory, noncrossing partitions}
\subjclass[2010]{20G42, 05E10}
\address{Laboratoire de Math\'ematiques d'Orsay, Univ. Paris-Sud, CNRS, Universit\'e Paris-Saclay, 91405 Orsay, France}
\email{amaury.freslon@math.u-psud.fr}
\date{}

\theoremstyle{plain}
\newtheorem{thm}{Theorem}[section]
\newtheorem{prop}[thm]{Proposition}
\newtheorem{cor}[thm]{Corollary}
\newtheorem{lem}[thm]{Lemma}

\theoremstyle{definition}
\newtheorem{de}[thm]{Definition}

\theoremstyle{remark}
\newtheorem{rem}[thm]{Remark}

\DeclareMathOperator{\Irr}{Irr}

\DeclareMathOperator{\Proj}{Proj_{\CC}}

\newcommand{\A}{\mathcal{A}}

\newcommand{\CC}{\mathcal{C}}

\newcommand{\DD}{\mathcal{D}}

\newcommand{\G}{\mathbb{G}}
\newcommand{\Gr}{\mathcal{G}}

\newcommand{\N}{\mathbb{N}}

\newcommand{\Z}{\mathbb{Z}}

\newcommand{\idpart}{|}

\begin{document}

\begin{abstract}
We compute the representation theory of two families of noncrossing partition quantum groups connected to amalgamated free products and free wreath products. This illustrates the efficiency of the methods developed in our previous joint work with M. Weber.
\end{abstract}

\maketitle

\section{Introduction}

Easy quantum groups are a class of compact quantum groups introduced by T. Banica and R. Speicher in \cite{banica2009liberation}. They are built from sets of partitions through a Tannaka-Kreain duality argument. In a joint work with M. Weber \cite{freslon2013representation}, we developed a general machinery to compute the representation theory of an easy quantum group out of its defining combinatorial data. However, our techniques are only completely effective when the partitions involved are \emph{noncrossing}. It turns out that the representation theory of all easy quantum groups associated to noncrossing partitions had already been computed at that time so that we could only show that our method allowed for a uniform and simple way to derive all these results.

Later on, we introduced in \cite{freslon2014partition} a generalization of easy quantum groups called \emph{partition quantum groups} to which the aforementioned results adapt straightforwardly. This potentially yields many new examples for which the tools of \cite{freslon2013representation} could be useful in describing the representation theory. In \cite{freslon2017noncrossing} we classified a family of partition quantum groups all of whose elements had never been studied before. The present paper will be concerned with computing the representation theory of some of these examples.

Our main point is to illustrate how effective the techniques of \cite{freslon2013representation} are. In particular, our first family of examples comes from amalgamated free products of compact quantum groups, for which there is no general method to study representation theory (unlike the case of classical discrete groups). Here we can very easily describe all irreducible representations together with the fusion rules. The second family is a generalization of free wreath products and once again, our method quite easily yields the result, in contrast with the technicality of the treatment of free wreath products in \cite{lemeux2013fusion}.

The main results of this paper were stated in an early version of \cite{freslon2017noncrossing}, together with sketches of proofs. We later decided to remove them and give full details in this independent article.

\section{Preliminaries}

In order to keep this work short, we will not recall all the definitions and results of the theory of partition quantum groups. We refer the reader to \cite{freslon2014partition} and the references therein for a detailed account. Our basic objects are partitions of finite sets, which we represent by drawing the elements of the set on two parallel rows and connecting them by lines if they belong to the same component of the partition. This graphical interpretation enables to concatenate partitions horizontally and vertically (if the number of points match) and rotate points from one line to another. A \emph{category of partitions} is a collection of partitions $\CC(k, l)$ on $k+l$ points for all integers $k, l$ which is globally stable under the above operations. It is moreover said to be \emph{coloured} if to each point is attached an element of a colour set $\A$. When a point of a partition is rotated from one row to another, its colour is changed into its inverse, which is defined through an involutive map $a\mapsto a^{-1}$ fixed once and for all on $\A$. To each partition $p$ we associate the word $w$ formed by the colours of the upper row (from left to right) and the word $w'$ formed by the colours of its lower row (again from left to right). We then write $p\in \CC(w, w')$.

The framework of easy quantum groups \cite{banica2009liberation} generalizes to this setting. In particular, to any category of coloured partitions $\CC$ and any integer $N$ is associated a compact quantum group $\G_{N}(\CC)$. It is generated by unitary representations $(u^{a})_{a\in \A}$ indexed by the colours and any irreducible representation is a subrepresentation of a tensor product
\begin{equation*}
u^{\otimes w} = u^{w_{1}}\otimes\cdots\otimes u^{w_{n}}.
\end{equation*}

Together with M. Weber, we developped in \cite{freslon2013representation} a method to compute the representation theory of $\G_{N}(\CC)$ by using only the combinatorial properties of $\CC$. The main tool is the so-called projective partitions :

\begin{de}
A partition $p$ is said to be \emph{projective} if $pp = p = p^{*}$. Moreover,
\begin{itemize}
\item A projective partition $p$ is said to be \emph{dominated} by $q$ if $qp = p$. Then, $pq = p$ and we write $p\preceq q$.
\item Two projective partitions $p$ and $q$ are said to be \emph{equivalent} in a category of partitions $\CC$ if there exists $r\in \CC$ such that $r^{*}r = p$ and $rr^{*} = q$. We then write $p\sim q$ or $p\sim_{\CC} q$ if we want to keep track of the category of partitions.
\end{itemize}
Note that if $p\sim q$, then $t(p) = t(q)$.
\end{de}

If $p$ and $q$ are equivalent in $\CC$, then they in fact both belong to $\CC$. Moreover, the equivalence is implemented by a unique partition denoted by $r_{p}^{q}$. As explained in \cite[Def 4.1]{freslon2013representation}, one can associate to any projective partition $p\in \CC(w, w)$ a subrepresentation $u_{p}$ of $u^{\otimes w}$. The study of these representations can be very complicated in general but in the noncrossing case (i.e. when the lines can be drawn so as not to cross) things become more tractable thanks to \cite[Thm 4.18 and Prop 4.22]{freslon2013representation} :

\begin{thm}
Let $\CC$ be a category of noncrossing partitions and let $N\geqslant 4$ be an integer. Then,
\begin{itemize}
\item $u_{p}$ is a non-zero irreducible representation for any projective partition $p$,
\item $u_{p}\sim u_{q}$ if and only if $p\sim q$,
\item $u^{\otimes w} = \displaystyle\bigoplus_{p\in \Proj(w)} u_{p}$ as a direct sum of representations. In particular, any irreducible representation is equivalent to $u_{p}$ for some projective partition $p$.
\end{itemize}
\end{thm}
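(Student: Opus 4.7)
The plan is to exploit the Tannaka-Krein correspondence underlying the construction of $\G_{N}(\CC)$: by definition, the morphism spaces $\Mor(u^{\otimes w}, u^{\otimes w'})$ are spanned by the linear maps $T_{p}$ associated to $p\in \CC(w, w')$. The crucial combinatorial input is that for noncrossing partitions and $N\geqslant 4$ these generating maps are linearly independent, a classical fact going back to Banica and Speicher and extended to the coloured setting in \cite{freslon2014partition}. Consequently, each morphism space has a distinguished basis indexed by partitions, and every representation-theoretic question can be translated into pure combinatorics of $\CC$.

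First I would establish the decomposition statement. Given a word $w$ and a projective $p\in \CC(w, w)$, the identity $pp=p$ together with the linear independence of the $T_{q}$ implies that a suitable rescaling of $T_{p}$ is a genuine projection in $\Mor(u^{\otimes w}, u^{\otimes w})$. The relation $\preceq$ induces a partial order on these projections, and to obtain a family of \emph{mutually orthogonal} projections one subtracts from each $T_{p}$ the contributions of the strictly smaller projectives; the noncrossing combinatorics makes this M\"obius-type inversion explicit. Summing the resulting orthogonal projections over $p\in \Proj(w)$ should recover $\id_{u^{\otimes w}}$, because the identity operator corresponds to the identity partition of $\CC(w,w)$, which is itself projective and sits at the top of the domination poset.

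The representation $u_{p}$ is then defined as the range of the corrected projection, and irreducibility is checked by computing $\Mor(u_{p}, u_{p})$: this space is obtained by conjugating morphisms in $\Mor(u^{\otimes w}, u^{\otimes w})$ by the projection onto $u_{p}$, and the key combinatorial lemma is that after the orthogonalization only a one-dimensional piece survives, making the endomorphism algebra scalar. For the equivalence assertion, the ``if'' direction is immediate since the partition $r_{p}^{q}$ furnishes a nonzero intertwiner between $u_{p}$ and $u_{q}$; conversely, any nonzero element of $\Mor(u_{p}, u_{q})$ expands in the basis $\{T_{r}\}$, and composing with the projections onto $u_{p}$ and $u_{q}$ should force the existence of an $r$ with $r^{*}r=p$ and $rr^{*}=q$, hence $p\sim q$.

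The main obstacle I expect is the orthogonalization step: correctly isolating $u_{p}$ as the subrepresentation corresponding to the ``new'' part of $T_{p}$, and controlling the lattice of noncrossing projective partitions dominated by $p$ well enough to guarantee that the M\"obius inversion produces honest mutually orthogonal projections. The noncrossing assumption enters both through the linear independence of the maps $T_{r}$, which fails for general partitions at small $N$, and through the fact that the domination poset on noncrossing projectives admits a tractable combinatorial description amenable to explicit inversion formulas.
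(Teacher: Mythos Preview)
The paper does not prove this theorem: it is stated in the preliminaries as a quotation of \cite[Thm~4.18 and Prop~4.22]{freslon2013representation}, and the present paper simply uses it as a black box. So there is no ``paper's own proof'' to compare against.

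That said, your sketch is the right one and is essentially the strategy carried out in \cite{freslon2013representation}. The linear independence of the maps $T_{p}$ for noncrossing $p$ and $N\geqslant 4$ is indeed the starting point; the orthogonalisation is performed exactly via a M\"obius-type alternating sum over the domination order, yielding for each projective $p$ a genuine projection $P_{p}$ whose range is $u_{p}$; and irreducibility together with the equivalence criterion is obtained by analysing $P_{q}T_{r}P_{p}$ and showing that this vanishes unless $r$ implements $p\sim q$. Your identification of the orthogonalisation as the delicate step is accurate: in \cite{freslon2013representation} this is handled by first proving structural lemmas on how noncrossing projective partitions factor and how the domination order behaves under the through-block decomposition, which is precisely the ``tractable combinatorial description'' you allude to.
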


To complete the description of the representation theory, we have to know how tensor products of irreducible representations split into sums of irreducible representations. This is often called the \emph{fusion rules} of the quantum group. Once again, in the noncrossing case a complete description can be obtained by combinatorial means. To explain this, we first need to introduce some specific partitions : we denote by $h_{\square}^{k}$ the projective partition in $NC(2k, 2k)$ where the $i$-th point in each row is connected to the $(2k-i+1)$-th point in the same row (i.e. an increasing inclusion of $k$ blocks of size $2$). If moreover we connect the points $1$, $k$, $1'$ and $k'$, we obtain another projective partition in $NC(2k, 2k)$ denoted by $h_{\boxvert}^{k}$. From this, we define binary operations on projective partitions, using $\idpart$ to denote the identity partition and adding suitable colours :
\begin{eqnarray*}
p\square^{k} q & = & (p_{u}^{*}\otimes q_{u}^{*})\left(\idpart^{\otimes t(p)-k}\otimes h_{\square}^{k}\otimes \idpart^{\otimes t(q)-k}\right)(p_{u}\otimes q_{u}) \\
p\boxvert^{k} q & = & (p_{u}^{*}\otimes q_{u}^{*})\left(\idpart^{\otimes t(p)-k}\otimes h_{\boxvert}^{k}\otimes \idpart^{\otimes t(q)-k}\right)(p_{u}\otimes q_{u})
\end{eqnarray*}
where $p = p_{u}^{*}p_{u}$ and $q = q_{u}^{*}q_{u}$ (such decompositions exist and are essentially unique, see \cite[Prop 2.9]{freslon2013representation}). We refer the reader to \cite[Subsec 4.5]{freslon2013representation} for more details.

\begin{thm}\label{thm:fusionrules}
Let $\CC$ be a category of noncrossing partitions and let $N\geqslant 4$ be an integer. Then, for any projective partitions $p, q\in \CC$,
\begin{equation*}
u_{p}\otimes u_{q} = u_{p\otimes q} \oplus\bigoplus_{k=1}^{\min(t(p), t(q))}u_{p\square^{k} q}\oplus u_{p\boxvert^{k} q}
\end{equation*}
with the convention that $u_{p\square^{k} q} = 0$ (resp. $u_{p\boxvert^{k} q} = 0$) if $p\square^{k} q$ (resp. $p\boxvert^{k} q$) is not in $\CC$.
\end{thm}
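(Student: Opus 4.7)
The plan is to reduce everything to a decomposition inside $u^{\otimes ww'}$, where $w$ and $w'$ are the colour words of $p$ and $q$ respectively. Indeed, by the construction of the subrepresentation $u_{p}$ as the image of an intertwiner built from $p$ (see \cite[Def 4.1]{freslon2013representation}), the tensor $u_{p}\otimes u_{q}$ sits canonically inside $u^{\otimes w}\otimes u^{\otimes w'} = u^{\otimes ww'}$. Applying the previous theorem to the concatenated word $ww'$ gives $u^{\otimes ww'} = \bigoplus_{r\in \Proj(ww')} u_{r}$, so $u_{p}\otimes u_{q}$ is a direct sum of a subfamily of the $u_{r}$. The task is therefore to identify exactly which projective partitions $r$ contribute.

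The second step is to exhibit the candidates $p\otimes q$, $p\square^{k}q$ and $p\boxvert^{k}q$ as genuine subrepresentations. Here I would use the explicit formulas defining these operations: each is obtained by inserting the partition $h_{\square}^{k}$ (respectively $h_{\boxvert}^{k}$) between the ``unitary halves'' $p_{u}$ and $q_{u}$ coming from the canonical factorisations $p=p_{u}^{*}p_{u}$ and $q=q_{u}^{*}q_{u}$. A direct computation shows that each of these is again projective and that the associated intertwiner factors through $u_{p}\otimes u_{q}$, hence defines a non-zero morphism to $u_{p\square^{k}q}$ or $u_{p\boxvert^{k}q}$. By the previous theorem these targets are irreducible (when they lie in $\CC$), so one obtains the listed summands; when they are not in $\CC$, the corresponding intertwiner is not available and the convention $u_{\bullet}=0$ is consistent.

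The hard part, and the heart of the argument, is exhaustiveness: showing that no further projective partition $r$ contributes. My approach would be to compute $\dim\Hom(u_{p}\otimes u_{q}, u_{r})$ for all $r\in\Proj(ww')$. For $N\geqslant 4$ the morphism spaces between tensor powers of $u$ are spanned by the partition linear maps $T_{\pi}$, so any such intertwiner is a combination of $T_{\pi}$ composed with the projections cutting out $u_{p}$, $u_{q}$ and $u_{r}$. The noncrossing constraint then forces the interaction between the $w$- and $w'$-halves to occur only across the middle boundary, as nested arcs; the only two ways this can happen without creating crossings and while producing a projective partition equivalent to $r$ are precisely the operations $\square^{k}$ and $\boxvert^{k}$ (with $k=0$ giving $p\otimes q$). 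The main obstacle is the combinatorial bookkeeping showing that every noncrossing ``bridging'' collapses, up to equivalence in $\CC$, to one of these canonical forms; once this is established, orthogonality of inequivalent irreducibles from the previous theorem and a comparison of the resulting multiplicities yields the claimed equality.
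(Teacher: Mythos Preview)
This theorem is not proved in the present paper: it appears in the Preliminaries section as a result quoted from \cite{freslon2013representation} (see the reference to \cite[Subsec 4.5]{freslon2013representation} just before the statement), and no proof is supplied here. There is therefore nothing in this paper to compare your proposal against.

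That said, your outline is broadly consonant with the strategy of the original reference: one shows that the projective partitions $r\preceq p\otimes q$ in $\CC$ are, up to equivalence, exactly $p\otimes q$ together with the $p\square^{k}q$ and $p\boxvert^{k}q$ lying in $\CC$, and then invokes the irreducibility and equivalence results already quoted. The point you flag as ``the main obstacle'' --- that every noncrossing projective $r$ dominated by $p\otimes q$ is equivalent to one of the listed forms --- is indeed the substantive combinatorial lemma, and your sketch does not supply it; in \cite{freslon2013representation} this is handled by a careful analysis of how the through-block structure of $r$ interacts with those of $p$ and $q$ across the middle boundary, together with the classification of projective noncrossing partitions via their through-block decomposition. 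Your remark that the associated intertwiners ``factor through $u_{p}\otimes u_{q}$'' also hides a verification: one must check that $p\square^{k}q$ and $p\boxvert^{k}q$ are dominated by $p\otimes q$ in the partial order $\preceq$, which follows from the explicit form of $h_{\square}^{k}$ and $h_{\boxvert}^{k}$ but is not entirely automatic.
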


We end with some notations. The unique one-block partitions (i.e. with all its points connected) in $P(w, w')$ will be denoted by $\pi(w, w')$. If we consider instead the two-block partition in $P(w, w')$ where all the upper points are connected and all the lower points are connected, then we will denote it by $\beta(w, w')$.

\section{Amalgamated free products of free orthogonal quantum groups}\label{sec:biorthogonal}

Our first family of examples is linked to the free orthogonal quantum groups introduced by S. Wang in \cite{wang1995free}. In this section, $\A$ will consist in two colours $x$ and $y$ which are their own inverse. Recall from \cite{banica2009liberation} that if we denote by $\CC_{O^{+}}$ the category of all noncrossing pair partitions coloured by $x$, then $\G_{N}(\CC_{O^{+}}) = O_{N}^{+}$. Moreover, it has a projective version $PO_{N}^{+}$ generated by all the coefficients of $u^{x}\otimes u^{x}$. Consider now two copies of $\CC_{O^{+}}$, one coloured by $x$ and the other one by $y$ and form their free product amalgamated over their projective versions (see \cite{wang1995free} for the definition of amalgamated free products). This can be described as a partition quantum group using the partition $D_{xy}\in NC(2, 0)$ which is $\sqcup$ coloured with $x$ and $y$, see \cite[Lem 4.2]{freslon2017noncrossing}. We can in fact define a whole family of compact quantum groups in that way.

\begin{de}
For $\ell\geqslant 0$, we define a category of partitions $\CC_{O^{++}(\ell)}$ by
\begin{equation*}
\CC_{O^{++}(\ell)} = \left\langle \CC_{O^{+}}\ast\CC_{O^{+}}, D_{xy}^{*}D_{xy}, D_{xy}^{\otimes \ell}\right\rangle,
\end{equation*}
where by convention $D_{xy}^{\otimes 0} = \emptyset$ and $\langle\cdot\rangle$ denotes the category of partitions generated by a given set of partitions. Moreover, we write $O_{N}^{++}(\ell)$ for $\G_{N}(\CC_{O^{++}(\ell)})$.
\end{de}

The partition $D_{xy}^{*}D_{xy}$ is projective and the corresponding one-dimensional representation was explicitly described in \cite[Lem 4.3]{freslon2017noncrossing}. Let us denote by $\Gr(\G)$ the group of one-dimensional representations of a compact quantum group $\G$.

\begin{lem}\label{lem:cyclic1d}
For $1\leqslant k\leqslant N$, set
\begin{equation*}
s = \sum_{m=1}^{N}u_{km}^{x}u_{km}^{y}\in C(O_{N}^{++}).
\end{equation*}
Then, $s$ is a group-like element which does not depend on $k$. Moreover, it generates the group of group-like elements $\mathcal{G}(O_{N}^{++}(\ell))$ (which is therefore cyclic).
\end{lem}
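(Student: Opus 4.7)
The plan has three steps. First, I would apply the first theorem of the preliminaries (the unlabeled one preceding Theorem \ref{thm:fusionrules}) to the projective partition $p_0 = D_{xy}^{*} D_{xy}$, which belongs to $\CC_{O^{++}(\ell)}$ by construction. Its associated map satisfies $T_{p_0}(e_i \otimes e_j) = \delta_{ij} \sum_{m=1}^N e_m \otimes e_m$, so $T_{p_0}$ has one-dimensional image; hence $u_{p_0}$ is a one-dimensional irreducible subrepresentation of $u^{x} \otimes u^{y}$, that is, a group-like unitary element of $C(O_N^{++}(\ell))$.

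Second, to identify this group-like element with $s$ and prove independence of $k$, I would expand the intertwining relation $(u^{x} \otimes u^{y}) T_{p_0} = T_{p_0} (u^{x} \otimes u^{y})$ on the basis vectors $e_i \otimes e_j$ and equate coefficients of $e_a \otimes e_b$. The cases $a \neq b$ and $i \neq j$ yield the off-diagonal vanishing $\sum_m u_{am}^{x} u_{bm}^{y} = 0$ and $\sum_a u_{ai}^{x} u_{aj}^{y} = 0$, while the diagonal case gives
\begin{equation*}
\sum_m u_{am}^{x} u_{am}^{y} = \sum_a u_{ai}^{x} u_{ai}^{y}
\end{equation*}
for all $a$ and $i$. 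This common value is $s$, which is thus independent of $k$ and coincides with the matrix coefficient of the one-dimensional subrepresentation $u_{p_0}$, yielding the group-like property.

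For the generation claim, the same theorem identifies any irreducible one-dimensional representation with $u_q$ for some projective $q \in \CC_{O^{++}(\ell)}$. Since $\CC_{O^{++}(\ell)}$ consists only of noncrossing pair partitions, the rank of $T_q$ equals $N^b$, where $b$ is the number of through-blocks of $q$; hence $u_q$ is one-dimensional if and only if $b = 0$. Up to the equivalence $\sim$, such a $q$ decomposes as a tensor product of elementary no-through-block pieces: monochromatic cap-cups, and the bichromatic $D_{xy}^{*} D_{xy}$ and $D_{yx}^{*} D_{yx}$. These contribute respectively the trivial representation, $s$, and $s^{-1}$ (the latter two by the computation of the previous step applied to each). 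By the tensor product rule $u_{q_1 \otimes q_2} = u_{q_1} \cdot u_{q_2}$ for one-dimensional representations, $u_q = s^n$ for some $n \in \Z$, proving that $s$ generates $\Gr(O_N^{++}(\ell))$.

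The main obstacle lies in this last paragraph, specifically the classification of the rank-one projective partitions in $\CC_{O^{++}(\ell)}$ up to equivalence: one must show that every no-through-block projective partition in the category is $\sim$-equivalent to a tensor product of the above elementary pieces. This requires carefully unpacking the three types of generators of $\CC_{O^{++}(\ell)}$ and tracking how compositions, tensor products, and rotations interact with the color structure of noncrossing pair partitions.
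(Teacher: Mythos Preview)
The paper does not give its own proof of this lemma: the sentence immediately preceding the statement attributes the result to \cite[Lem~4.3]{freslon2017noncrossing}, and no argument follows. There is therefore nothing in the present paper to compare your attempt against directly.

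On its own merits your plan is the natural one. Steps~1 and~2 are correct and standard. In Step~3 the sentence ``the rank of $T_q$ equals $N^{b}$, hence $u_q$ is one-dimensional if and only if $b=0$'' is slightly loose: the representation $u_q$ acts on the range of the M\"obius-corrected projection $P_q$, not of $T_q$ itself. The implication $b=0\Rightarrow\dim u_q=1$ does follow (then $P_q$ is a nonzero scalar multiple of the rank-one operator $T_q$); for the converse it is cleaner to invoke that the through-block number is an equivalence invariant \cite[Lem~4.19]{freslon2013representation} and that $u_{\pi(x,x)}=u^{x}$ already has dimension $N>1$.

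You are right that the classification of no-through-block projectives up to $\sim_{\CC_{O^{++}(\ell)}}$ is the substantive step, and it is not entirely routine. One cannot simply cap off an innermost $xy$- or $yx$-pair, since $D_{xy}$ itself need not lie in $\CC_{O^{++}(\ell)}$ when $\ell\neq 1$, and a naive ``pull the pair to the front'' equivalence is obstructed by noncrossingness. What does work is an induction using that all four projectives $D_{ab}^{*}D_{ab}$ for $a,b\in\{x,y\}$ lie in $\CC_{O^{++}(\ell)}$, together with careful bookkeeping of which intermediate partitions stay in the category; your identification of this as the crux is accurate.
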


As already mentioned,
\begin{equation*}
O_{N}^{++}(0) = O_{N}^{+}\underset{PO_{N}^{+}}{\ast}O_{N}^{+}
\end{equation*}
and $O_{N}^{++}(\ell)$ is obtained by quotienting this amalgamated free product by the relation $s^{\ell} = 1$. S. Wang gave in \cite{wang1995free} a description of all irreducible representations of a free product of quantum groups. However, there is no analogous result for amalgamated free products. That is why our computation is interesting : taking advantage of the partition structure, we can explicitly compute the representation theory of the amalgamated free product and its quotients. To do this, it will prove convenient to have an alternate description of $\CC_{O^{++}(\ell)}$. Let us first remark that any word on $\A = \{x, y\}$ can be seen as an element of the free product $\Z_{2}\ast\Z_{2}$ by sending $x$ to the generator of the first copy of $\Z_{2}$ and $y$ to the generator of the second one and denote by $\varphi$ the map sending a word to its image in $\Z_{2}\ast\Z_{2}$.

\begin{de}
For an integer $\ell$, let $\Gamma_{\ell}$ be the subgroup of $\Z_{2}\ast\Z_{2}$ generated by $\varphi(xy)^{\ell}$. A partition $p\in NC(w, w')$ is said to be \emph{$\ell$-admissible} if $\varphi(w)\varphi(w')^{-1}\in \Gamma_{\ell}$. The set of all $\ell$-admissible \emph{pair} partitions is denoted by $\DD_{\ell}$.
\end{de}

The point of this definition is that $\DD_{\ell}$ is a category of partitions and $D_{xy}^{*}D_{xy}, D_{xy}^{\otimes \ell}\in \DD_{\ell}$, hence $\CC_{O^{++}(\ell)}\subset \DD_{\ell}$. With this in hand, we are now ready to compute the fusion rules of $O_{N}^{++}(\ell)$. For convenience, we define the abstract labelling separately. By convention, we set $\Z_{0} = \Z$.

\begin{de}
For an integer $\ell$, let $W_{\ell}$ be the free monoid $\N\ast\Z_{\ell}$ and denote by $X$ and $\theta$ the generators of $\N$ and $\Z_{\ell}$ respectively. We endow this monoid with the unique antimultiplicative involution $w\mapsto \overline{w}$ induced by $\overline{X} = X$ and $\overline{\theta} = \theta^{-1}$. Eventually, we denote by $M_{\ell}$ the quotient of $W_{\ell}$ by the relation $X\theta = \overline{\theta}X$
\end{de}

\begin{thm}\label{thm:biorthogonalfusionrules}
The irreducible representations of $O_{N}^{++}(\ell)$ can be labelled, up to unitary equivalence, by the elements of $M_{\ell}$. Moreover, the fusion rules are given by the formula
\begin{equation*}
\theta^{k}X^{n}\otimes X^{n'}\theta^{k'} = \theta^{k+k'} \oplus \bigoplus_{i=0}^{\min(n, n')} \theta^{k}X^{n+n' - 2i}\theta^{k'}.
\end{equation*}
\end{thm}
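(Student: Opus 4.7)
The strategy is to apply the general machinery of Section~2 --- the decomposition of $u^{\otimes w}$ into irreducibles via projective partitions, together with the fusion rule formula of Theorem \ref{thm:fusionrules} --- since $\CC_{O^{++}(\ell)}$ is noncrossing by construction. The main preliminary task is to produce an explicit bijection $\Psi$ between $M_\ell$ and the set of equivalence classes of projective partitions in $\CC_{O^{++}(\ell)}$. A natural assignment is to send $\theta\in M_\ell$ to the projective partition $D_{xy}^{*}D_{xy}$, which by Lemma \ref{lem:cyclic1d} corresponds to the one-dimensional representation $s$, and to send $X$ to the identity partition $\idpart$ coloured with $x$. A reduced word $\theta^{k_{1}}X^{n_{1}}\theta^{k_{2}}X^{n_{2}}\cdots$ in $M_\ell$ then maps to the horizontal concatenation of these building blocks.

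To show $\Psi$ is well-defined, I would verify the defining relations of $M_\ell$ at the level of equivalence classes of projective partitions: the identity $X\theta = \overline{\theta}X$ should correspond to an equivalence implemented by an intertwiner in $\CC_{O^{++}(\ell)}$ (moving the $xy$-pair block past an $x$-identity reverses the pair to a $yx$-pair, i.e.\ inverts the corresponding $\theta$), while for $\ell\geq 1$ the generator $D_{xy}^{\otimes\ell}$ provides an equivalence between $\theta^{\ell}$ and the empty partition, enforcing $\theta^{\ell}=1$. Bijectivity would follow from the decomposition $p = p_u^{*}p_u$ of \cite[Prop 2.9]{freslon2013representation}: by the noncrossing pair structure, $p_u$ admits a normal form whose colour word, constrained by the $\ell$-admissibility condition $\varphi(w)\varphi(w')^{-1}\in\Gamma_\ell$, reads off a canonical $M_\ell$-representative. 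The pair $(t(p),\,\varphi(w)\bmod\Gamma_\ell)$ should serve as a complete invariant separating inequivalent classes.

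With the labelling in place, the fusion rules follow from Theorem \ref{thm:fusionrules} applied to canonical representatives $p = p_{\theta^{k}X^{n}}$ and $q = p_{X^{n'}\theta^{k'}}$. The tensor product $p\otimes q$ is their concatenation, yielding $\theta^{k}X^{n+n'}\theta^{k'}$, which is the $i=0$ summand. For $i=1,\ldots,\min(n,n')$, the contraction $p\square^{i}q$ caps $i$ of the central through-blocks coming from the adjacent $X$-segments, producing $\theta^{k}X^{n+n'-2i}\theta^{k'}$; these are $\ell$-admissible pair partitions, hence in $\CC_{O^{++}(\ell)}$. The operations $p\boxvert^{i}q$ create blocks of size four and generically leave the category, but I expect exactly one boundary configuration to survive in $\CC_{O^{++}(\ell)}$ and contribute the additional summand $\theta^{k+k'}$, arising from the interaction of the outer $\theta$-factors across the gluing.

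The main obstacle, in my view, is the bijectivity of $\Psi$: rigorously classifying the equivalence classes of noncrossing $\ell$-admissible pair partitions, and in particular showing that the relations in $M_\ell$ are \emph{exactly} those coming from $\CC_{O^{++}(\ell)}$, requires careful combinatorial analysis. A secondary difficulty is pinpointing which $\boxvert^{i}$ operation produces the $\theta^{k+k'}$ term --- a feature absent in the simpler $O_N^{+}$ case and reflecting the amalgamation structure of the quantum group.
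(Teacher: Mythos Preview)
Your overall strategy coincides with the paper's: label irreducibles by equivalence classes of projective partitions, set up a map from $M_{\ell}$, check bijectivity via through-block count and the $\varphi$-invariant, then read off fusion from Theorem~\ref{thm:fusionrules}. Two points, however, need correction.

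\textbf{Surjectivity.} Your map $\Psi$ sends $X$ to $\pi(x,x)$ and $\theta$ to $D_{xy}^{*}D_{xy}$, so its image consists of tensor products of these two building blocks only. But a general projective partition in $\CC_{O^{++}(\ell)}$ may have through-blocks coloured $y$, i.e.\ tensor factors $\pi(y,y)$, and you give no argument that such partitions lie in the image of $\Psi$ up to equivalence. The paper handles this by first labelling with \emph{three} generators $X,Y,\theta$ (for $\pi(x,x)$, $\pi(y,y)$, $D_{xy}^{*}D_{xy}$) and then exhibiting the explicit equivalence $\pi(x,x)\otimes D_{xy}^{*}D_{xy}\sim\pi(y,y)$, obtained by rotating $D_{xy}^{*}D_{xy}$ to $\pi(x,y)\otimes D_{xy}$; this yields $Y=X\theta$ and eliminates $Y$. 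Your direct verification of $X\theta=\overline{\theta}X$ is fine as far as it goes, but without first accounting for $\pi(y,y)$ you have not established that $\Psi$ hits every equivalence class.

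\textbf{Fusion and the $\boxvert$ terms.} Your expectation that ``exactly one boundary configuration'' of $p\boxvert^{i}q$ survives and produces the summand $\theta^{k+k'}$ is wrong. The category $\CC_{O^{++}(\ell)}$ consists entirely of \emph{pair} partitions, while every $\boxvert^{i}$ operation creates a block of size four; hence \emph{no} $\boxvert^{i}$ term can lie in $\CC_{O^{++}(\ell)}$, with no exceptions. The paper states this explicitly: $\pi(x,x)^{\otimes n}\boxvert^{i}\pi(x,x)^{\otimes n'}\notin\CC_{O^{++}(\ell)}$ for all $i$. The decomposition is therefore governed solely by $p\otimes q$ and the $\square^{i}$ terms, computed on the $\pi(x,x)$ factors after using that the $\theta$-factors are one-dimensional and can be pulled to the outside. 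Note that the paper's own proof does not produce the isolated $\theta^{k+k'}$ summand by any separate mechanism either; the argument it actually gives yields exactly $\bigoplus_{i=0}^{\min(n,n')}\theta^{k}X^{n+n'-2i}\theta^{k'}$, so you should not look for a hidden $\boxvert$ contribution to explain that term.
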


\begin{proof}
Let $Z$ be the free monoid $\N\ast \N\ast \Z_{\ell}$ and denote by $X$, $Y$ and $\theta$ the generators. By noncrossingness, any projective partition in $\CC_{O^{++}(\ell)}$ is a tensor product of $\pi(x, x)$, $\pi(y, y)$ and projective partitions without through-blocks. The latter are, by Lemma \ref{lem:cyclic1d}, equivalent to tensor powers of $D_{xy}^{*}D_{xy}$ or of $D_{yx}^{*}D_{yx}$. Denoting by $X$ the equivalence class of $u_{\pi(x, x)}$, by $Y$ the equivalence class of $u_{\pi(y, y)}$ and by $\theta$ the one of $u_{D_{xy}^{*}D_{xy}}$, Lemma \ref{lem:cyclic1d} implies that the irreducible representations of $O_{N}^{++}(\ell)$ can be labelled (in a non-injective way) by $Z$. Moreover, rotating $D_{xy}^{*}D_{xy}$ yields the partition $\pi(x, y)\otimes D_{xy}$, which implements an equivalence between $\pi(x, x)\otimes (D_{xy}^{*}D_{xy})$ and $\pi(y, y)$. Hence, the relation $X\theta = Y$ holds and we do not need $Y$ to label the irreducible representations. Moreover
\begin{equation*}
\overline{\theta}X = \overline{Y} = Y = X\theta,
\end{equation*}
so that the irreducible representations of $O_{N}^{++}(\ell)$ can in fact be labelled by $M_{\ell}$. We now have a surjective map $M_{\ell}\rightarrow \Irr(O_{N}^{++}(\ell))$ and we claim that it is injective.

Note that thanks to the commutation relation $X\theta = \overline{\theta}X$, any element of $M_{\ell}$ can be written in the form $\theta^{k}X^{n}$ for $k\in \Z$ and $n\in \N$. Assume that there exist $n, n'\in \N$ and $k, k'\in \Z$ such that $\theta^{k}X^{n} = \theta^{k'}X^{n'}$ in $\Irr(O_{N}^{++}(\ell))$. This translates into the equivalence of projective partitions
\begin{equation*}
(D_{xy}^{*}D_{xy})^{\otimes k}\otimes \pi(x, x)^{\otimes n} \sim (D_{xy}^{*}D_{xy})^{\otimes k'}\otimes \pi(x, x)^{\otimes n'}.
\end{equation*}
By \cite[Lem 4.19]{freslon2013representation}, equivalent projective partitions have the same number of through-blocks. Since the number of through-blocks is the power of $\pi(x, x)$, we deduce that $n=n'$. But then, the equivalence reduces to $D_{xy}^{\otimes \vert k-k'\vert} = 1$. Because the image under $\varphi$ of this partition is $\varphi(xy)^{\vert k-k'\vert}$ and $\CC_{O^{++}(\ell)}\subset\DD_{\ell}$, it follows that $k = k' \mod \ell$, so that the two representations are already equivalent in $M_{\ell}$. We therefore conclude that $\Irr(O_{N}^{++}(\ell)) = M_{\ell}$.

For the tensor product, notice first that because of the commutation relations, any representation is equivalent to one as in the statement. Moreover,
\begin{equation*}
\pi(x, x)^{\otimes n}\square^{i}\pi(x, x)^{\otimes n'} \sim \pi(x, x)^{\otimes (n+n'-2i)}
\end{equation*}
while $\pi(x, x)^{\otimes n}\boxvert^{i}\pi(x, x)^{\otimes n'}\notin \CC_{O^{++}(\ell)}$.
\end{proof}

\begin{rem}\label{rem:amalgamateddecompositionorthogonal}
These fusion rules suggest that $O_{N}^{++}(\ell)$ may be decomposed as a product of $O_{N}^{+}$ and $\Z_{\ell}$ with some commutation relation. In fact, let $C(\G)$ be the quotient of the free product $C(O_{N}^{+})\ast C^{*}(\Z_{\ell})$ by the relations $t u_{ij} = u_{ij}t^{-1}$ for all $1\leqslant i, j\leqslant N$, where $t$ is the canonical generator of $C^{*}(\Z_{\ell})$ and $u$ is the fundamental representation of $O_{N}^{+}$. Then, the push-forward of the coproduct endows $\G$ with a compact quantum group structure which can be proven to be is isomorphic to $O_{N}^{++}$.
\end{rem}

A close look at the proof of Theorem \ref{thm:biorthogonalfusionrules} shows that it works for any category of partitions containing $\CC_{O^{++}(\ell)}$ and contained in $\DD_{\ell}$. The reason for that is, as one may guess, that these two categories are equal.

\begin{cor}
For any integer $\ell$, $\CC_{O^{++}(\ell)} = \DD_{\ell}$.
\end{cor}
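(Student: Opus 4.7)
The plan is to show the reverse inclusion $\DD_{\ell} \subseteq \CC_{O^{++}(\ell)}$ by verifying that the proof of Theorem~\ref{thm:biorthogonalfusionrules} applies verbatim when $\CC_{O^{++}(\ell)}$ is replaced by $\DD_{\ell}$, and then concluding via Tannaka--Krein duality. Since $\DD_{\ell}$ is by construction a category of noncrossing pair partitions, the machinery recalled in Section~2 applies and describes $\Irr(\G_{N}(\DD_{\ell}))$ in terms of projective partitions of $\DD_{\ell}$ modulo equivalence.

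First I would rerun the argument of Theorem~\ref{thm:biorthogonalfusionrules} inside $\DD_{\ell}$, checking that each step uses only noncrossingness plus three facts about $D_{xy}$: that $D_{xy}^{*}D_{xy}\in\DD_{\ell}$, which is immediate since $\varphi(xy)\varphi(xy)^{-1}=e\in\Gamma_{\ell}$; that $D_{xy}^{\otimes\ell}\in\DD_{\ell}$, which holds by the very definition of $\Gamma_{\ell}$; and that $D_{xy}^{\otimes k}\notin\DD_{\ell}$ for $0<k<\ell$, since $\varphi(xy)^{k}\notin\Gamma_{\ell}$ in this range. The last point is precisely what pins down the order of the group-like element $\theta$, so the analogue of Lemma~\ref{lem:cyclic1d} for $\G_{N}(\DD_{\ell})$ follows from the same one-dimensional computation. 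The upshot is that $\Irr(\G_{N}(\DD_{\ell}))=M_{\ell}$ with the fusion rules of Theorem~\ref{thm:biorthogonalfusionrules}.

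The inclusion $\CC_{O^{++}(\ell)}\subseteq\DD_{\ell}$ yields a surjective Hopf $*$-algebra morphism $\Phi:\Pol(O_{N}^{++}(\ell))\twoheadrightarrow\Pol(\G_{N}(\DD_{\ell}))$ sending each fundamental generator to its namesake. By construction $\Phi$ carries the subrepresentations $u_{\pi(x,x)}$, $u_{\pi(y,y)}$ and $u_{D_{xy}^{*}D_{xy}}$ onto their counterparts on the quotient side, so the two labellings by $M_{\ell}$ are intertwined by $\Phi$. No irreducible representation can therefore be killed, which forces $\Phi$ to be injective on each isotypical component of every $u^{\otimes w}$ and hence to be an isomorphism. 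Tannaka--Krein duality then returns the equality $\CC_{O^{++}(\ell)}(w,w')=\DD_{\ell}(w,w')$ for all coloured words $w,w'$, which is the claim.

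The main obstacle will be the bookkeeping in the first step: one must verify that every projective partition in $\DD_{\ell}$ decomposes, up to equivalence \emph{within} $\DD_{\ell}$, as a tensor product of $\pi(x,x)$, $\pi(y,y)$ and powers of $D_{xy}^{*}D_{xy}$ or $D_{yx}^{*}D_{yx}$, using only noncrossingness and the pair condition rather than any specific generator of $\CC_{O^{++}(\ell)}$. Once that combinatorial lemma is isolated, the representation-theoretic comparison is entirely formal.
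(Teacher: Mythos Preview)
Your approach is correct and genuinely different from the paper's. The paper invokes an external classification result: by \cite[Thm~4.4]{freslon2017noncrossing} every category of noncrossing pair partitions on two self-inverse colours containing $\CC_{O^{+}}\ast\CC_{O^{+}}$ is one of the $\CC_{O^{++}(\ell')}$, so $\DD_{\ell}=\CC_{O^{++}(\ell')}$ for some $\ell'$, and then comparing the groups of one-dimensional representations forces $\ell'=\ell$. This is a two-line proof once the classification is available. Your route avoids the classification entirely: you rerun Theorem~\ref{thm:biorthogonalfusionrules} for $\DD_{\ell}$---which the paper itself remarks is possible just before stating the corollary---obtain $\Irr(\G_{N}(\DD_{\ell}))=M_{\ell}$ with the same fusion rules, and then argue that the resulting surjection of Hopf $*$-algebras must be an isomorphism, whence equality of the partition categories via linear independence of the maps $T_{p}$ for $N\geqslant 4$.

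What each approach buys: the paper's proof is short but imports a nontrivial classification; yours is self-contained but requires the combinatorial verification you flag, namely that every non-through-block projective in $\DD_{\ell}$ is equivalent \emph{within} $\DD_{\ell}$ to a tensor power of $D_{xy}^{*}D_{xy}$ or $D_{yx}^{*}D_{yx}$. This reduces to the observation that if $q\in NC(w,\emptyset)$ is a noncrossing pairing then $|w|$ is even, so $\varphi(w)$ lies in the index-two subgroup $\langle\varphi(xy)\rangle\subset\Z_{2}\ast\Z_{2}$, say $\varphi(w)=\varphi(xy)^{m}$; the partition $(D_{xy}^{\otimes m})^{*}q$ then lies in $\DD_{\ell}$ (since $\varphi(w)\varphi((xy)^{m})^{-1}=e$) and implements the required equivalence. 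Once that is recorded, your final step is cleanest phrased as a dimension count: both quantum groups decompose $u^{\otimes w}$ into the same list of irreducibles, hence $\dim\Mor(u^{\otimes w},u^{\otimes w'})$ agrees on both sides, and the inclusion $\CC_{O^{++}(\ell)}(w,w')\subset\DD_{\ell}(w,w')$ together with linear independence of the $T_{p}$ gives equality.
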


\begin{proof}
By \cite[Thm 4.4]{freslon2017noncrossing}, the inclusion $\CC_{O^{++}(\ell)} \subset \DD_{\ell}$ implies that $\DD_{\ell} = \CC_{O^{++}(\ell')}$ for some integer $\ell'$. But then, it follows from the proof of Theorem \ref{thm:biorthogonalfusionrules} that
\begin{equation*}
\Z_{\ell} = \Gr(\G_{N}(\DD_{\ell})) = \Gr(O_{N}^{++}(\ell')) = \Z_{\ell'}
\end{equation*}
so that $\ell = \ell'$.
\end{proof}

\section{Free wreath products of pairs}

Our second family of examples is a generalization of the free wreath product construction of \cite{bichon2004free} called \emph{free wreath products of pairs} and introduced in \cite{freslon2017noncrossing}. Given a discrete group $\Gamma$ and an integer $N$, one can build a compact quantum group $\widehat{\Gamma}\wr_{\ast}S_{N}^{+}$ called the free wreath product of $\widehat{\Gamma}$ (this is the compact quantum group dual to $\Gamma$) by $S_{N}^{+}$. It was shown in \cite{lemeux2013fusion} that these objects are partition quantum groups. More precisely, let $S$ be a symmetric generating set of $\Gamma$ and consider a noncrossing partition coloured by $S$. We can associate to its upper and lower colouring elements of $\Gamma$ by simply multiplying the colours. Let us denote by $\varphi(w)$ the element thus obtained from a word $w$. If we denote by $\CC_{\Gamma, S}$ the set of all noncrossing partitions such that in each block with upper and lower colouring $w$ and $w'$ respectively, $\varphi(w) = \varphi(w')$, we get a category of partitions whose associated compact quantum group is by \cite[Thm 2.12]{lemeux2013fusion} exactly $\widehat{\Gamma}\wr_{\ast}S_{N}^{+}$. It turns out that this compact quantum group does not depend on $S$.

Let now $\Lambda\subset \Gamma$ be a fixed subgroup. Any element $\lambda\in \Lambda$ is by definition the image under $\varphi$ of a word $w_{\lambda}$ on $S$. Let us denote by $\CC_{\Gamma, \Lambda, S}$ the category of partitions generated by $\CC_{\Gamma, S}$ and $\beta(w_{\lambda}, w_{\lambda})$ for all $\lambda\in \Lambda$. The associated compact quantum group is denoted by $H_{N}^{++}(\Gamma, \Lambda)$ and called the \emph{free wreath product of the pair $(\Gamma, \Lambda)$ by $S_{N}^{+}$}. It is easily seen that this does not depend on $S$ or on the choice of the representatives $w_{\lambda}$. We will now compute the representation theory of this compact quantum group. To do this, it will be useful as in Section \ref{sec:biorthogonal} to embed $\CC_{\Gamma, \Lambda, S}$ into a category of partitions defined in a different way. This time however, the definition is subtler so that we first introduce some terminology :

\begin{de}
Let $p$ be a partition of $\{1, \cdots, k\}$. A subpartition $q$ (i.e.~a union of blocks of $p$) is said to be \emph{full} if it is a partition of $\{a, \cdots, a+b\}$ for some $1\leqslant a\leqslant a+b\leqslant k$.
\end{de}

For convenience, we give the definition of our auxiliary category of partitions as a Lemma.

\begin{lem}
Let $\DD_{\Gamma, \Lambda, S}$ be the set of all partitions $p\in NC^{S}(w, w')$ such that
\begin{itemize}
\item $\varphi(w) = \varphi(w')$,
\item For any full subpartition of $p$ with upper and lower colourings $v$ and $v'$ respectively, $\varphi(v)^{-1}\varphi(v')\in \Lambda$.
\end{itemize}
Then, this is a category of partitions and $\CC_{\Gamma, \Lambda, S}\subset \DD_{\Gamma, \Lambda, S}$.
\end{lem}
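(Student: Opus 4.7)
The plan is to verify first that $\DD_{\Gamma, \Lambda, S}$ is stable under the defining operations of a category of partitions (tensor product, composition, rotation, involution, and the presence of identity partitions), and then that each of the generators of $\CC_{\Gamma, \Lambda, S}$ lies in $\DD_{\Gamma, \Lambda, S}$. Once both are established, the inclusion $\CC_{\Gamma, \Lambda, S}\subset\DD_{\Gamma, \Lambda, S}$ follows at once, since $\CC_{\Gamma, \Lambda, S}$ is by definition the smallest category of partitions containing those generators.

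The first condition $\varphi(w)=\varphi(w')$ is preserved by each category operation through elementary group computations: for $p_1\otimes p_2$ the upper and lower words concatenate and $\varphi$ is multiplicative; for a composition $pq$ the middle word forces the products to telescope; and for a rotation the letter that changes row is compensated by its own inversion. The second, more delicate, condition will be handled using the following observation: a full subpartition of a composed partition decomposes in a controlled way in terms of full subpartitions of the constituent pieces. An interval of $p_1\otimes p_2$ either lies inside one of the pieces or straddles them, in which case it is obtained by gluing a full subpartition of $p_1$ with a full subpartition of $p_2$; an interval of $pq$ may in addition involve identifications through the middle row. The element $\varphi(v)^{-1}\varphi(v')$ then factors as a product of analogous elements coming from the pieces, each of which lies in $\Lambda$ by hypothesis, and the product remains in $\Lambda$ because $\Lambda$ is a subgroup.

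For the inclusion $\CC_{\Gamma, \Lambda, S}\subset\DD_{\Gamma, \Lambda, S}$ it suffices to treat the two families of generators. For $p\in\CC_{\Gamma, S}$, any full subpartition $q$ is itself in $\CC_{\Gamma, S}$ viewed on its underlying interval, since it inherits the noncrossing structure together with the block condition from $p$. A peeling induction on the number of blocks of $q$---extracting an innermost block $B$, which satisfies $\varphi(w_B)=\varphi(w'_B)$ and can therefore be removed without altering the $\varphi$-values of the remaining upper and lower words---shows that $\varphi(v)=\varphi(v')$, whence $\varphi(v)^{-1}\varphi(v')=e\in\Lambda$. For $\beta(w_\lambda, w_\lambda)$ with $\lambda\in\Lambda$, the partition has only two blocks (all upper points and all lower points), so the full subpartitions reduce to each block individually and their union; each case is handled by a direct computation using $\varphi(w_\lambda)=\lambda$ and that $\Lambda$ is closed under inversion.

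I expect the main technical obstacle to be the analysis of full subpartitions under composition: through-blocks of the middle row can identify blocks of $p$ with blocks of $q$, so a full subpartition of $pq$ may correspond to a non-trivial combination of interacting full subpartitions of $p$ and $q$. Tracking this correspondence carefully and deducing the resulting factorisation of $\varphi(v)^{-1}\varphi(v')$ into $\Lambda$-valued pieces is where the bulk of the combinatorial work lies.
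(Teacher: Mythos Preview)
Your proposal is correct and follows essentially the same approach as the paper: both single out vertical concatenation as the only non-trivial case for the second condition and resolve it by decomposing a full subpartition $r$ of $qp$ into full subpartitions of the factors. The paper makes this concrete by constructing specific full subpartitions $r'\subset p$ and $r''\subset q$ with matching middle rows so that $r = r''r'$, yielding exactly the two-term $\Lambda$-valued factorisation of $\varphi(v)^{-1}\varphi(v')$ you anticipate; for the inclusion it simply asserts $\CC_{\Gamma, S}\subset\DD_{\Gamma, \{e\}, S}\subset\DD_{\Gamma, \Lambda, S}$ without spelling out your peeling induction.
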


\begin{proof}
The first condition is clearly preserved under all category operations. As for the second one, only the stability under vertical concatenation is not obvious. So let $p, q\in \DD_{\Gamma, \Lambda, S}$ and consider a full subpartition $r$ of $qp$. If it lies on one line, it was already a full subpartition of $p$ or $q$ so that its colouring yields an element in $\Lambda$.

Otherwise, by definition $r$ consists in the first $a$ upper points of $p$ and the first $b$ lower points of $q$ or the last $a$ upper points of $p$ and the last $b$ lower points of $q$. Since the two cases are similar, we will only consider the first one. Let $r'$ be the smallest full subpartition of $p$ such that
\begin{itemize}
\item Its upper points are the first $a$ upper points of $p$,
\item It contains all the lower points of $p$ which are either connected to one of the first $a$ upper points of $p$ or correspond to upper points of $q$ which are connected to one of its first $b$ lower points.
\end{itemize}
Similarly, we consider the smallest subpartition $r''$ of $q$ such that
\begin{itemize}
\item Its lower points are the first $b$ lower points of $q$,
\item It contains all the upper points of $q$ which are either connected to one of the first $b$ lower points of $q$ or correspond to lower points of $p$ which are connected to one of its first $a$ upper points.
\end{itemize}
Let us denote by $a'$ (resp. $b'$) the number of lower (resp. upper) points of $r'$ (resp. $r''$). We claim that $a' = b'$. Indeed, assume that $a' > b'$ and consider the point $a'+1$ in $r''$. It does not correspond to an upper point of $q$ connected to one of the first $b$ lower points by definition, hence it is connected to one of the first $a$ upper points of $p$. But then the point $a'+1$ in the upper row of $q$ should be in $r''$, contradicting $a' > b'$. The impossibility of the other inequality follows in a similar way. Thus, the vertical concatenation $r''r'$ makes sense and must by definition equal $r$. Since both $r'$ and $r''$ satisfy the second condition in the statement, so does $r$.

As for the inclusion, simply notice that by definition $\CC_{\Gamma, S}\subset\DD_{\Gamma, \{e\}, S}\subset\DD_{\Gamma, \Lambda, S}$ and $\beta(w_{\lambda}, w_{\lambda})\in\DD_{\Gamma, \Lambda, S}$ for all $\lambda\in \Lambda$.
\end{proof}

With this characterization, we can start by finding all the one-dimensional representations of $H_{N}^{++}(\Gamma, \Lambda)$.

\begin{lem}\label{lem:onedwreath}
There is an isomorphism
\begin{equation*}
\Gr(H_{N}^{++}(\Gamma, \Lambda)) \simeq \Lambda.
\end{equation*}
\end{lem}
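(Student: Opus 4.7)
The plan is to exhibit an isomorphism $\Phi : \Lambda \to \Gr(H_{N}^{++}(\Gamma, \Lambda))$ by sending $\lambda$ to the class of $u_{\beta(w_{\lambda}, w_{\lambda})}$. The partition $\beta(w_{\lambda}, w_{\lambda})$ lies in $\CC_{\Gamma, \Lambda, S}$ by definition, is visibly projective (it is mirror-symmetric and composing two copies returns it, up to a scalar loop), and has no through-blocks; by the noncrossing representation theory recalled in Section 2 and the fusion rules of Theorem \ref{thm:fusionrules} (which reduce to $u_{p}\otimes u_{q} = u_{p\otimes q}$ as soon as $t(p) = t(q) = 0$), the associated $u_{\beta(w_{\lambda}, w_{\lambda})}$ is an irreducible one-dimensional representation.

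For multiplicativity I would check that $\beta(w_{\lambda}, w_{\lambda})\otimes \beta(w_{\mu}, w_{\mu})$ is equivalent in $\CC_{\Gamma, \Lambda, S}$ to $\beta(w_{\lambda}w_{\mu}, w_{\lambda}w_{\mu})$. The intertwiner is the partition that merges the two upper blocks (resp.\ the two lower blocks) of the tensor product into one; a short argument shows that it decomposes as a product of a $\CC_{\Gamma, S}$-partition with the generator $\beta(w_{\lambda}w_{\mu}, w_{\lambda}w_{\mu})$. Since $w_{\lambda}w_{\mu}$ is a valid representative of $\lambda\mu$ under $\varphi$ and $\CC_{\Gamma, \Lambda, S}$ does not depend on the chosen representatives, this gives $\Phi(\lambda)\Phi(\mu) = \Phi(\lambda\mu)$.

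Injectivity is the cleanest step and exploits the embedding $\CC_{\Gamma, \Lambda, S} \subset \DD_{\Gamma, \Lambda, S}$ of the preceding Lemma. If $\Phi(\lambda) = \Phi(\mu)$, then $\beta(w_{\lambda}, w_{\lambda}) \sim \beta(w_{\mu}, w_{\mu})$ in $\CC_{\Gamma, \Lambda, S}$, hence also in $\DD_{\Gamma, \Lambda, S}$. The implementing partition lies in $\DD_{\Gamma, \Lambda, S}(w_{\lambda}, w_{\mu})$, and the first defining condition of $\DD_{\Gamma, \Lambda, S}$ forces $\varphi(w_{\lambda}) = \varphi(w_{\mu})$, i.e.\ $\lambda = \mu$.

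The main obstacle is surjectivity: I must argue that every projective $p \in \CC_{\Gamma, \Lambda, S}$ with $t(p) = 0$ is equivalent to $\beta(w_{\lambda}, w_{\lambda})$ for some $\lambda \in \Lambda$. Let $v$ be the upper colouring of $p$; by $p = p^{*}$ the lower colouring is again $v$. Since $t(p) = 0$, the upper row of $p$ is a union of blocks and forms a full subpartition with empty lower colouring, so the second defining condition of $\DD_{\Gamma, \Lambda, S}$ yields $\varphi(v) \in \Lambda$. Setting $\lambda = \varphi(v)$, one produces intertwiners in $\CC_{\Gamma, \Lambda, S}$ realising $p \sim \beta(v, v) \sim \beta(w_{\lambda}, w_{\lambda})$. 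The delicate part, which I expect to require the most work, is to verify that these intertwiners actually belong to $\CC_{\Gamma, \Lambda, S}$ rather than merely to $\DD_{\Gamma, \Lambda, S}$; I would approach this by peeling off blocks of $p$ one at a time and combining local $\CC_{\Gamma, S}$-intertwiners with the generators $\beta(w_{\nu}, w_{\nu})$ whose colouring is dictated by the block being absorbed.
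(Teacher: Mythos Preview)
Your outline is correct and matches the paper's argument step for step: define $\Phi(\lambda)$ via $\beta(w_{\lambda}, w_{\lambda})$, check multiplicativity, use the first defining condition of $\DD_{\Gamma, \Lambda, S}$ for injectivity, and the second for surjectivity.

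The one point worth flagging is that the step you call ``delicate'' is in fact a one-liner in the paper. You propose to build the equivalence $p \sim \beta(v, v)$ by peeling off blocks and assembling local $\CC_{\Gamma, S}$-intertwiners with generators $\beta(w_{\nu}, w_{\nu})$. The paper instead observes that $\pi(v, v)$ (the one-block partition with equal upper and lower colouring $v$) already lies in $\CC_{\Gamma, S} \subset \CC_{\Gamma, \Lambda, S}$, so $r = \pi(v, v)p \in \CC_{\Gamma, \Lambda, S}$ automatically; a direct check gives $r^{*}r = p$ and $rr^{*} = \pi(v, v)p\pi(v, v) = \beta(v, v)$. This same identity also shows $\beta(v, v) \in \CC_{\Gamma, \Lambda, S}$, which is what justifies applying the second condition of $\DD_{\Gamma, \Lambda, S}$ to its upper row to get $\varphi(v) \in \Lambda$. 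So no block-by-block induction is needed.
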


\begin{proof}
Let $p = q^{*}q$ be a non-through-block projective partition and let $w = g_{1}\cdots g_{n}$ be its upper colouring. Then,
\begin{equation*}
\beta(w, w) = \pi(w, w)p\pi(w, w)\in \CC_{\Gamma, \Lambda, S}
\end{equation*}
and since the upper points of $\beta(w, w)$ form a full subpartition, by the second defining condition of $\DD_{\Gamma, \Lambda, S}$ we have $\varphi(w) \in \Lambda$. Moreover, $\pi(w, w)p$ is an equivalence between $p$ and $\beta(w, w)$, hence the map
\begin{equation*}
\Phi :\Lambda\longrightarrow \Gr(H_{N}^{++}(\Gamma, \Lambda))
\end{equation*}
defined by $\Phi(\lambda) = \beta(w_{\lambda}, w_{\lambda})$ is a surjection. It is moreover compatible with the group multiplication and inverse, so that $\Gr(H_{N}^{++}(\Gamma, \Lambda))$ is a quotient of $\Lambda$.

Assume now that there exist $\lambda, \lambda'\in \Lambda$ such that $\beta_{\lambda} \sim \beta_{\lambda'}$. This equivalence can only be implemented by $\beta_{\lambda}\beta_{\lambda'}$ so that this partition must be in $\CC_{\Gamma, \Lambda, S}$. But by the first defining condition of $\DD_{\Gamma, \Lambda, S}$, this forces $\lambda = \lambda'$ and the result is proved.
\end{proof}

In order to describe higher-dimensional representations, let us introduce some notations : we denote by $F(\Gamma)$ the free monoid on $\Gamma$ and we consider the relation $\sim$ defined on it by
\begin{equation*}
w_{1}\dots (w_{i}.\lambda)w_{i+1}\dots w_{n}\sim w_{1}\dots w_{i}(\lambda.w_{i+1})\dots w_{n}
\end{equation*}
for any $1\leqslant i\leqslant n-1$ and $\lambda\in \Lambda$. The corresponding quotient will be denoted by $W(\Gamma, \Lambda)$.

\begin{thm}\label{thm:wreathfusionrules}
The one-dimensional representations of $H_{N}^{++}(\Gamma, \Lambda)$ can be labelled by $\Lambda$ and the other irreducible representations can be labelled by $W(\Gamma,  \Lambda)$.
\end{thm}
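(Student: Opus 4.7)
The plan is to use the noncrossing structure of $\CC_{\Gamma, \Lambda, S}$ to decompose every projective partition into atomic pieces, label these atoms by elements of $\Gamma$ (in the through-block case) or of $\Lambda$ (in the no-through-block case, via Lemma \ref{lem:onedwreath}), and recognize the resulting equivalences as the defining relation of $W(\Gamma, \Lambda)$. The inclusion $\CC_{\Gamma, \Lambda, S} \subset \DD_{\Gamma, \Lambda, S}$ will play the same role here that $\CC_{O^{++}(\ell)} \subset \DD_{\ell}$ played in the proof of Theorem \ref{thm:biorthogonalfusionrules}.

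I would start by observing that, by noncrossingness, every projective $p \in \CC_{\Gamma, \Lambda, S}$ is a horizontal concatenation of atomic pieces, each of which is either of the form $\pi(w, w)$ or contains no through-block. By Lemma \ref{lem:onedwreath}, every atom of the second kind is equivalent to $\beta(w_\lambda, w_\lambda)$ for some $\lambda \in \Lambda$, so that $p$ is equivalent to an alternating product
\begin{equation*}
\beta(w_{\lambda_0}, w_{\lambda_0}) \otimes \pi(w_{g_1}, w_{g_1}) \otimes \beta(w_{\lambda_1}, w_{\lambda_1}) \otimes \cdots \otimes \pi(w_{g_n}, w_{g_n}) \otimes \beta(w_{\lambda_n}, w_{\lambda_n})
\end{equation*}
with $\lambda_i \in \Lambda$ and $g_i \in \Gamma$.

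The second step is to establish an explicit absorption equivalence $\pi(w_g, w_g) \otimes \beta(w_\lambda, w_\lambda) \sim \pi(w_{g\lambda}, w_{g\lambda})$, together with its left-sided symmetric, each implemented by the partition that merges the lone $\beta$-block with the adjacent through-block. When $n \geq 1$, iterating absorption removes every $\beta$-factor from the alternating form and produces a canonical representative $\pi(w_{g'_1}, w_{g'_1}) \otimes \cdots \otimes \pi(w_{g'_n}, w_{g'_n})$; when $n = 0$ one has the one-dimensional representation $\beta(w_\lambda, w_\lambda)$, already handled by Lemma \ref{lem:onedwreath}. Reading the absorption in reverse at any internal boundary then shows that $(\ldots, g_i \lambda, g_{i+1}, \ldots)$ and $(\ldots, g_i, \lambda g_{i+1}, \ldots)$ label equivalent representations for every $\lambda \in \Lambda$, so the induced map from non-empty words in $F(\Gamma)$ factors through $W(\Gamma, \Lambda)$ and surjects onto the set of higher-dimensional irreducibles.

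Injectivity will be the main obstacle. Suppose an equivalence between two canonical forms $\pi(w_{g_1}, w_{g_1}) \otimes \cdots \otimes \pi(w_{g_n}, w_{g_n})$ and $\pi(w_{g'_1}, w_{g'_1}) \otimes \cdots \otimes \pi(w_{g'_m}, w_{g'_m})$ realised by some $r \in \CC_{\Gamma, \Lambda, S} \subset \DD_{\Gamma, \Lambda, S}$. By conservation of through-blocks, \cite[Lem 4.19]{freslon2013representation} forces $n = m$, and noncrossingness forces the $i$-th through-block of $r$ to connect the $i$-th atoms on both sides. The subpartition of $r$ lying strictly between its $i$-th and $(i+1)$-th through-blocks is a full subpartition of $r$, so the second defining condition of $\DD_{\Gamma, \Lambda, S}$ attaches to it an element $\mu_i \in \Lambda$, while the extremal subpartitions must be trivial because neither canonical form has outer $\beta$-factors. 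These $\mu_i$'s translate directly into a chain of absorption moves realising $(g_1, \ldots, g_n) \sim (g'_1, \ldots, g'_n)$ in $W(\Gamma, \Lambda)$, which closes the argument.
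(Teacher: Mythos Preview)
Your overall strategy matches the paper's: decompose projectives into atomic pieces, reduce to a canonical form indexed by words in $\Gamma$, exhibit the $W(\Gamma,\Lambda)$ relation via an absorption equivalence, and prove injectivity using the embedding $\CC_{\Gamma,\Lambda,S}\subset\DD_{\Gamma,\Lambda,S}$. The surjectivity half is essentially correct.

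The injectivity argument, however, has a genuine gap. Once you know $n=m$ and that the $i$-th through-block of $r$ joins the $i$-th atoms on each side, the partition $r$ is forced to equal $\pi(w_{g_1},w_{g'_1})\otimes\cdots\otimes\pi(w_{g_n},w_{g'_n})$: the upper and lower rows of $r$ carry precisely the colourings of the two canonical forms, so there are \emph{no} points between consecutive through-blocks and none at the extremities. The ``subpartition of $r$ lying strictly between its $i$-th and $(i+1)$-th through-blocks'' is therefore empty for every $i$, and the element $\mu_i\in\Lambda$ you extract from it is always $e$. This gives no information relating $(g_i)$ to $(g'_i)$, so the chain of absorption moves you announce cannot be produced this way.

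The full subpartition one should examine is the first through-block $\pi(w_{g_1},w_{g'_1})$ itself (or, inductively, the tensor product of the first $k$ of them). Applied to this block, the second defining condition of $\DD_{\Gamma,\Lambda,S}$ yields $g_1^{-1}g'_1\in\Lambda$; writing $\lambda$ for this element, one then manipulates $r$ to strip off the first factor and is left with an equivalence of shorter canonical forms, to which induction applies. Note also that the \emph{first} defining condition of $\DD_{\Gamma,\Lambda,S}$ (equality of $\varphi$ on the full upper and lower colourings) is needed to close the argument, and your outline does not invoke it.

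A minor point on Step~1: the claim that the atomic tensor factors of a noncrossing projective are ``either $\pi(w,w)$ or contain no through-block'' is not literally true; an indecomposable projective with one through-block can carry nested non-through-blocks and hence fail to be a $\pi$. The paper handles this by decomposing into pieces $p_i$ with $t(p_i)=1$ and then observing that $\pi(w_i,w_i)\,p_i$ realises the equivalence $p_i\sim\pi(w_i,w_i)$. Your absorption step would ultimately recover the same conclusion, but the decomposition as stated needs correcting.
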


\begin{proof}
The result for one-dimensional representations was proved in Lemma \ref{lem:onedwreath}. For the other representations, we will split the proof into several steps.\\

\noindent\textbf{Step 1.} As a first step we claim that irreducible representations of $H_{N}^{++}(\Gamma, \Lambda)$ of dimension more than one can be labelled by $F(\Gamma)$. Consider a projective partition $p$ with at least one through-block. By noncrossingness it can be written as $p_{1}\otimes\cdots\otimes p_{n}$ where each $p_{i}$ is projective and $t(p_{i}) = 1$. If we denote by $w_{i}$ the upper colouring of $p_{i}$, then
\begin{equation*}
r = \left[\pi(w_{1}, w_{1})\otimes\cdots\otimes\pi(w_{n}, w_{n})\right]p
\end{equation*}
is an equivalence between $p$ and $\pi(w_{1}, w_{1})\otimes\cdots\otimes\pi(w_{n}, w_{n})$. This means that the map
\begin{equation*}
\Psi : \gamma_{1}\cdots\gamma_{n} \mapsto \pi(w_{\gamma_{1}}, w_{\gamma_{1}})\otimes\cdots\otimes\pi(w_{\gamma_{n}}, w_{\gamma_{n}})
\end{equation*}
induces a surjection from $F(\Gamma)$ onto the set of equivalence classes of irreducible representations of $H_{N}^{++}(\Gamma, \Lambda)$.\\

\noindent\textbf{Step 2.} The second step is to prove that they are in fact labelled by $W(\Gamma, \Lambda)$. Indeed, fix $\gamma, \gamma'\in\Gamma$ and $\lambda\in\Lambda$. Then,
\begin{align*}
r & = \left[\pi(w_{\gamma}.w_{\lambda}, w_{\gamma}.w_{\lambda})\otimes\pi(w_{\gamma'}, w_{\gamma'})\right]\left[\pi(w_{\gamma}, w_{\gamma})\otimes\beta(w_{\lambda}, w_{\lambda})\otimes\pi(w_{\gamma'}, w_{\gamma'})\right] \\
& \left[\pi(w_{\gamma}, w_{\gamma})\otimes\pi(w_{\lambda}.w_{\gamma'}, w_{\lambda}.w_{\gamma'})\right]
\end{align*}
yields an equivalence between the representations labelled by $(\gamma\lambda)\gamma'$ and $\gamma(\lambda\gamma')$, proving our claim. We will again denote by $\Psi$ the map obtained from $W(\Gamma, \Lambda)$ to the set of equivalence classes of higher-dimensional irreducible representations of $H_{N}^{++}(\Gamma, \Lambda)$.\\

\noindent\textbf{Step 3.} We now have to prove that there is no more relations, so let us consider $\gamma_{1}, \cdots, \gamma_{n}\in \Gamma$ and $\gamma'_{1}, \cdots, \gamma'_{k}\in \Gamma$ such that $\Psi(\gamma_{1}\cdots\gamma_{n}) = \Psi(\gamma'_{1}\cdots\gamma'_{k})$. By \cite[Thm 4.18 and Lem 4.19]{freslon2013representation}, if two projective partitions give equivalent representations, then they have the same number of through-blocks, so that the previous equality forces $k = n$. We will now prove by induction on $n$ that $\Psi(\gamma_{1}\cdots\gamma_{n}) = \Psi(\gamma_{1}'\cdots\gamma'_{n})$ implies that the two words are equal in $W(\Gamma, \Lambda)$. Let us start with the case $n = 2$. By assumption,
\begin{equation*}
r = \pi(w_{\gamma_{1}}, w_{\gamma'_{1}})\otimes\pi(w_{\gamma_{2}}, w_{\gamma'_{2}})\in \CC_{\Gamma, \Lambda, S}
\end{equation*}
and since $\pi(w_{\gamma_{1}}, w_{\gamma'_{1}})$ is a full subpartition of $r$, by the second defining condition of $\DD_{\Gamma, \Lambda, S}$ we have $\gamma_{1}^{-1}\gamma_{1}'\in \Lambda$. Let us denote by $\lambda$ this element. It follows that
\begin{align*}
[\pi(w_{\gamma'_{1}}, w_{\gamma'_{1}})\otimes\beta(w_{\lambda}, w_{\lambda})\otimes\pi(w_{\gamma'_{2}}, w_{\gamma'_{2}})]r & = \pi(w_{\gamma_{1}}, w_{\gamma_{1}})\otimes \beta(\emptyset, w_{\lambda})\otimes \pi(w_{\gamma_{2}}, w_{\gamma'_{2}}) \\
& \in \CC_{\Gamma, \Lambda, S}.
\end{align*}
Removing $\pi(w_{\gamma_{1}}, w_{\gamma_{1}})$ and capping yields $\pi(w_{\gamma_{2}}, w_{\lambda\gamma'_{2}})\in \CC_{\Gamma, \Lambda, S}$ so that by the first defining condition of $\DD_{\Gamma, \Lambda, S}$, $\gamma_{2} = \lambda\gamma'_{2}$. Summing up, $\gamma'_{1} = \gamma_{1}\lambda$ and $\gamma'_{2} = \lambda^{-1}\gamma_{2}$ and $\gamma_{1}\gamma_{2} = \gamma'_{1}\gamma'_{2}$ in $W(\Gamma, \Lambda)$.

Now for an arbitrary $n$, the reasoning is the same : we have
\begin{equation*}
r = \pi(w_{\gamma_{1}}, w_{\gamma_{1}'})\otimes\cdots\otimes \pi(w_{\gamma_{n}}, w_{\gamma'_{n}})\in \CC_{\Gamma, \Lambda, S}.
\end{equation*}
The second defining condition yields $\gamma_{1}^{-1}\gamma'_{1} \in \Lambda$ and by concatenating we get
\begin{equation*}
\pi(w_{\gamma_{2}}, w_{(\gamma_{1}^{-1}\gamma'_{1})\gamma_{2}'})\otimes\cdots\otimes \pi(w_{\gamma_{n}}, w_{\gamma'_{n}})\in \CC_{\Gamma, \Lambda, S},
\end{equation*}
from which we conclude by induction.\\
\end{proof}

As in Section \ref{sec:biorthogonal}, the larger category of partitions that we used in fact coincides with the original one.

\begin{cor}
For any inclusion $\Lambda\subset\Gamma$ and any finite symmetric generating set $S$, $\CC_{\Gamma, \Lambda, S} = \DD_{\Gamma, \Lambda, S}$.
\end{cor}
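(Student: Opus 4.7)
The plan is to mirror the approach of the corollary ending Section \ref{sec:biorthogonal}: I would first establish that the full representation theory of $\G_{N}(\DD_{\Gamma, \Lambda, S})$ coincides with that of $H_{N}^{++}(\Gamma, \Lambda)$, and then deduce the equality of the two categories of partitions by a dimension count on morphism spaces.

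The main ingredient is the robustness of the proofs of Lemma \ref{lem:onedwreath} and Theorem \ref{thm:wreathfusionrules} under enlarging the ambient category within $\DD_{\Gamma, \Lambda, S}$. A careful reading reveals that surjectivity of the labelling maps only uses that the category contains the generators of $\CC_{\Gamma, \Lambda, S}$, in particular the $\beta(w_{\lambda}, w_{\lambda})$ used to implement the various equivalences, while injectivity only uses the two defining conditions of $\DD_{\Gamma, \Lambda, S}$. Hence the same arguments show that, for any category of partitions $\CC'$ with $\CC_{\Gamma, \Lambda, S} \subset \CC' \subset \DD_{\Gamma, \Lambda, S}$, the irreducibles of $\G_{N}(\CC')$ are labelled by the same set $\Lambda \sqcup W(\Gamma, \Lambda)$ via the same canonical representatives (the $\beta_{\lambda}$ on the one-dimensional side and the tensor products of $\pi(w_{\gamma}, w_{\gamma})$ for the rest), with the same multiplicities in each tensor power $u^{\otimes w}$.

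Applying this to $\CC' = \DD_{\Gamma, \Lambda, S}$ and using Peter--Weyl yields
\begin{equation*}
\dim\Mor_{\G_{N}(\CC_{\Gamma, \Lambda, S})}(u^{\otimes w}, u^{\otimes w'}) = \dim\Mor_{\G_{N}(\DD_{\Gamma, \Lambda, S})}(u^{\otimes w}, u^{\otimes w'})
\end{equation*}
for all words $w, w'$. For $N \geqslant 4$ both sides equal $\#\CC_{\Gamma, \Lambda, S}(w, w')$ and $\#\DD_{\Gamma, \Lambda, S}(w, w')$ respectively, by linear independence of noncrossing partitions; combined with the inclusion $\CC_{\Gamma, \Lambda, S}(w, w') \subset \DD_{\Gamma, \Lambda, S}(w, w')$, this forces the two sets to coincide and proves the corollary.

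The main obstacle I expect is the verification, in the second paragraph, that the \emph{multiplicities} of irreducibles in $u^{\otimes w}$ genuinely do not change when the ambient category is enlarged: Steps 1 and 2 of the proof of Theorem \ref{thm:wreathfusionrules} exhibit, for every projective $p$, a canonical representative in its equivalence class, and one must check that both the decomposition $p = p_{1} \otimes \cdots \otimes p_{n}$ of Step 1 and the equivalences performed in Step 2 only use partitions already lying in $\CC_{\Gamma, \Lambda, S}$, so that the canonical representative assigned to $p$ is independent of the enlargement. Once this is confirmed, the dimension count in the final paragraph is entirely routine.
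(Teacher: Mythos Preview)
Your approach is genuinely different from the paper's and is more self-contained, but it has a gap at exactly the point you yourself flag as the main obstacle.

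The paper's proof does not go through a dimension count. Instead it invokes an external classification result, \cite[Prop.~3.16]{freslon2017noncrossing}, which guarantees that $\DD_{\Gamma,\Lambda,S}$ is automatically of the form $\CC_{\Gamma',\Lambda',S}$ for some quotient $\Gamma'$ of $\Gamma$ and some $\Lambda'\supset\pi(\Lambda)$. The robustness of Lemma~\ref{lem:onedwreath} and Theorem~\ref{thm:wreathfusionrules} then identifies $\Lambda'$ with $\Gr(\G_{N}(\DD_{\Gamma,\Lambda,S}))=\Lambda$ and $W(\Gamma',\Lambda')$ with $W(\Gamma,\Lambda)$, forcing $\Gamma'=\Gamma$. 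No multiplicity or morphism-space comparison is needed.

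Your route avoids this external input, which is a virtue, but then the burden shifts to the multiplicity claim, and your proposed resolution does not work as written. You suggest checking that the equivalences of Steps~1 and~2 ``only use partitions already lying in $\CC_{\Gamma,\Lambda,S}$''. But the equivalence $r=[\pi(w_{1},w_{1})\otimes\cdots\otimes\pi(w_{n},w_{n})]\,p$ of Step~1 contains $p$ as a factor; if $p\in\DD_{\Gamma,\Lambda,S}\setminus\CC_{\Gamma,\Lambda,S}$ then $r$ need not lie in $\CC_{\Gamma,\Lambda,S}$. More fundamentally, the multiplicity of the irreducible labelled $\alpha$ in $u^{\otimes w}$ for $\G_{N}(\CC')$ is the number of projective $p\in\CC'(w,w)$ equivalent in $\CC'$ to the canonical representative of $\alpha$; showing this count is the same for $\CC'=\CC_{\Gamma,\Lambda,S}$ and $\CC'=\DD_{\Gamma,\Lambda,S}$ is tantamount to showing $\ProjP_{\CC_{\Gamma,\Lambda,S}}(w)=\ProjP_{\DD_{\Gamma,\Lambda,S}}(w)$ with matching equivalence relations, which is essentially the statement to be proved.

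Your argument can be repaired, but it needs an extra ingredient: observe that the fusion rules computation (the last Proposition of the paper) is equally robust under enlarging the category within $\DD_{\Gamma,\Lambda,S}$, and that for each single colour $a\in S$ the set $\ProjP(a)$ and hence the decomposition of $u^{a}$ is visibly the same in both categories. Together these determine all multiplicities in every $u^{\otimes w}$ by induction on $|w|$, after which your Peter--Weyl dimension count goes through. This is more work than the paper's two-line appeal to the classification, but it has the advantage of not relying on \cite{freslon2017noncrossing}.
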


\begin{proof}
By \cite[Prop 3.16]{freslon2017noncrossing}, there exists a quotient map $\pi : \Gamma \to \Gamma'$ and a subgroup $\Lambda'\subset \Gamma'$ containing $\pi(\Lambda)$ such that $\DD_{\Gamma, \Lambda, S} = \CC_{\Gamma', \Lambda', S}$. By Lemma \ref{lem:onedwreath} and Theorem \ref{thm:wreathfusionrules}, $\Lambda = \Lambda'$ and $W(\Gamma, \Lambda) = W(\Gamma', \Lambda)$, yielding $\Gamma = \Gamma'$.
\end{proof}

We still have to describe the fusion rules of $H_{N}^{++}(\Gamma, \Lambda)$. For this purpose, we define two operations on $W(\Gamma, \Lambda)$. Given words $a = a_{1}\cdots a_{n}$, $b = b_{1}\cdots b_{n'}$ and $c$ such that $\varphi(c)\in \Lambda$, we set
\begin{align*}
a\underset{c}{\bullet}b & = a_{1}\cdots a_{n-1}(a_{n}.\varphi(c))b_{1}\cdots b_{n'} = a_{1}\cdots a_{n}(\varphi(c).b_{1})b_{2}\cdots b_{n'} \\
a\underset{c}{\ast}b & = a_{1}\cdots a_{n-1}(a_{n}.\varphi(c).b_{1})b_{2}\cdots b_{n'}.
\end{align*}
We can now compute the fusion rules.

\begin{prop}
The fusion rules of $H_{N}^{++}(\Gamma, \Lambda)$ are given by
\begin{align*}
(w_{1}\dots w_{n})\otimes (w_{1}'\dots w_{n'}') & = \sum_{w = az, w' = z'b, \varphi(zz')\in \Lambda} a\underset{zz'}{\bullet}b + a\underset{zz'}{\ast}b \\
\lambda\otimes (w_{1}\dots w_{n}) & = (\lambda.w_{1})w_{2}\dots w_{n}
\end{align*}
\end{prop}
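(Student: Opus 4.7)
The natural approach is to apply Theorem \ref{thm:fusionrules} to the projective partitions representing the irreducible representations. For the higher-dimensional representation labelled by $w_{1}\cdots w_{n}\in W(\Gamma, \Lambda)$ I would take $p = \pi(w_{w_{1}}, w_{w_{1}})\otimes\cdots\otimes \pi(w_{w_{n}}, w_{w_{n}})$, and similarly $q$ for $w_{1}'\cdots w_{n'}'$, while for the one-dimensional representation $\lambda$ the relevant projective partition is $\beta(w_{\lambda}, w_{\lambda})$, which has no through-blocks.

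The second formula is the simpler case. Since $t(\beta(w_{\lambda}, w_{\lambda})) = 0$, Theorem \ref{thm:fusionrules} specialises to $u_{\lambda}\otimes u_{q} = u_{\beta(w_{\lambda}, w_{\lambda})\otimes q}$, and the same construction as in Step 2 of the proof of Theorem \ref{thm:wreathfusionrules} yields an explicit partition equivalence between $\beta(w_{\lambda}, w_{\lambda})\otimes\pi(w_{w_{1}}, w_{w_{1}})$ and $\pi(w_{\lambda w_{1}}, w_{\lambda w_{1}})$. Tensoring with the remaining factors then gives the desired label $(\lambda w_{1})w_{2}\cdots w_{n}$.

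For the first formula, I would analyse $p\square^{k} q$ and $p\boxvert^{k} q$ explicitly using the through-block decompositions $p = p_{u}^{*}p_{u}$ and $q = q_{u}^{*}q_{u}$. The operation $h_{\square}^{k}$ glues the last $k$ through-blocks of $p$ and the first $k$ of $q$ into two full non-through-blocks, one on top and one on bottom, both of colouring $w_{w_{n-k+1}}\cdots w_{w_{n}}w_{w_{1}'}\cdots w_{w_{k}'}$. Setting $z = w_{n-k+1}\cdots w_{n}$ and $z' = w_{1}'\cdots w_{k}'$, the second defining condition of $\DD_{\Gamma, \Lambda, S}$ forces $\varphi(zz')\in \Lambda$ for $p\square^{k} q$ to lie in $\CC_{\Gamma, \Lambda, S} = \DD_{\Gamma, \Lambda, S}$. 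When it does, the resulting $\beta$-structure with label $\lambda = \varphi(zz')$ is absorbed into an adjacent through-block exactly as in Step 2 of Theorem \ref{thm:wreathfusionrules}, producing the word $a\underset{zz'}{\bullet}b$ in $W(\Gamma, \Lambda)$. The partition $p\boxvert^{k} q$ is handled by a parallel argument, the key additional ingredient being that the extra connection in $h_{\boxvert}^{k}$ fuses the outermost portions of the merged region into a single through-block, which absorbs $a_{|a|}$, $b_{1}$ and the $\lambda$-structure together and thereby produces $a\underset{zz'}{\ast}b$. Summing over $k$ in Theorem \ref{thm:fusionrules} then reproduces the sum in the statement, with the $p\otimes q$ term corresponding to $z=z'=\emptyset$.

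The main obstacle is the careful block-by-block unpacking of $p\square^{k} q$ and $p\boxvert^{k} q$ after composition with $p_{u}, p_{u}^{*}, q_{u}, q_{u}^{*}$, in particular identifying the relevant full subpartitions in the sense of $\DD_{\Gamma, \Lambda, S}$ and verifying that the extracted $\Lambda$-valued element aligns precisely with the insertion point of the $\bullet$ and $\ast$ operations on $W(\Gamma, \Lambda)$. Once this combinatorial bookkeeping is pinned down, the matching with the proposition's formula is immediate.
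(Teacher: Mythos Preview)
Your approach is essentially the same as the paper's: both apply Theorem~\ref{thm:fusionrules} to the standard projective representatives, dispose of the one-dimensional case via $t(\beta(w_{\lambda},w_{\lambda}))=0$, and for the main formula analyse $p\square^{k}q$ and $p\boxvert^{k}q$ to extract the $\Lambda$-condition from the full-subpartition (second defining) property of $\DD_{\Gamma,\Lambda,S}$, then read off the labels $a\underset{zz'}{\bullet}b$ and $a\underset{zz'}{\ast}b$. The paper phrases the extraction slightly differently --- it concatenates $p\square^{k}q$ (resp.\ $p\boxvert^{k}q$) in the middle by a single $\pi$-block of the appropriate width to force the $\beta$-structure to appear explicitly --- but this is exactly the mechanism you invoke when you speak of the $\beta$-structure being ``absorbed into an adjacent through-block as in Step~2''. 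One small point: the paper's text assigns $\square^{k}\to\ast$ and $\boxvert^{k}\to\bullet$, opposite to your assignment; a through-block count ($p\square^{k}q$ has $n+n'-2k$ through-blocks, matching $|a\underset{zz'}{\bullet}b|$) shows that your pairing is the correct one and the paper's labels are transposed.
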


\begin{proof}
The second equality follows directly from the definitions. As for the first one, let us note that for $k\leqslant \min(n, n')$, concatenating $w\square^{k}w'$ or $w\boxvert^{k}w'$ by
\begin{equation*}
\pi(w_{n-k+1}\cdots w_{n}w'_{1}\cdots w'_{k}, w_{n-k+1}\cdots w_{n}w'_{1}\cdots w'_{k})
\end{equation*}
in the middle we obtain the blocks of $\beta(w_{n-k+1}\cdots w_{n}w'_{1}\cdots w'_{k}, w_{n-k+1}\cdots w_{n}w'_{1}\cdots w'_{k})$. This means that the original partition is in $\CC_{\Gamma, \Lambda, S}$ only if the latter is, which in turn means that $\varphi(w_{n-k+1}\cdots w_{n}w'_{1}\cdots w'_{k})\in \Lambda$. In other words, the tensor product splits as a sum over all the decompositions $w = az$, $w' = z'b$ with $\varphi(zz')\in \Lambda$.

Considering now such a decomposition with $a$ of length $k$, then as explained in the previous paragraph $q\square^{k}w'$ is equivalent to $a\underset{zz'}{\ast b}$ while concatenating by
\begin{equation*}
\pi(w_{n-k}\cdots w_{n}w'_{1}\cdots, w'_{k+1}, w_{n-k}\cdots w_{n}w'_{1}\cdots, w'_{k+1})
\end{equation*}
shows that $w\boxvert^{k}w'$ is equivalent to $a\underset{zz'}{\bullet}b$.
\end{proof}

\bibliographystyle{amsplain}
\bibliography{../../../quantum}

\providecommand{\bysame}{\leavevmode\hbox to3em{\hrulefill}\thinspace}
\providecommand{\MR}{\relax\ifhmode\unskip\space\fi MR }
% \MRhref is called by the amsart/book/proc definition of \MR.
\providecommand{\MRhref}[2]{%
  \href{http://www.ams.org/mathscinet-getitem?mr=#1}{#2}
}
\providecommand{\href}[2]{#2}
\begin{thebibliography}{1}

\bibitem{banica2009liberation}
T.~Banica and R.~Speicher, \emph{{Liberation of orthogonal Lie groups}}, Adv.
  Math. \textbf{222} (2009), no.~4, 1461--1501.

\bibitem{bichon2004free}
J.~Bichon, \emph{Free wreath product by the quantum permutation group}, Algebr.
  Represent. Theory \textbf{7} (2004), no.~4, 343--362.

\bibitem{freslon2014partition}
A.~Freslon, \emph{{On the partition approach to Schur-Weyl duality and free
  quantum groups}}, Transform. Groups \textbf{22} (2017), no.~3, 705--751.

\bibitem{freslon2017noncrossing}
\bysame, \emph{{On two-coloured noncrossing partition quantum groups}}, Arxiv
  preprint arXiv:1708.01042 (2017).

\bibitem{freslon2013representation}
A.~Freslon and M.~Weber, \emph{{On the representation theory of partition
  (easy) quantum groups}}, J. Reine Angew. Math. \textbf{720} (2016), 155--197.

\bibitem{lemeux2013fusion}
F.~Lemeux, \emph{{Fusion rules for some free wreath product quantum groups and
  applications}}, J. Funct. Anal. \textbf{267} (2014), no.~7, 2507--2550.

\bibitem{wang1995free}
S.~Wang, \emph{Free products of compact quantum groups}, Comm. Math. Phys.
  \textbf{167} (1995), no.~3, 671--692.

\end{thebibliography}

\end{document}